\newcommand{\bF}{\boldsymbol{F}}
\newcommand{\bS}{\boldsymbol{S}}
\newcommand{\balpha}{\boldsymbol{\alpha}}
\newcommand{\bcdot}{\boldsymbol{\cdot}}
\newtheorem{theorem}{Theorem}[section]
\newtheorem{cor}[theorem]{Corollary}
\newtheorem*{Mainthm}{Main theorem}
\newtheorem*{Kingman}{Kingman's Subadditive Ergodic Theorem}
\newcommand{\EQ}{\begin{equation}\begin{array}{lllllllll}}
\newcommand{\EE}{\end{array}\end{equation}}
\newcommand{\MT}{\left[ \begin{array}{ccccccccc}}
\newcommand{\EM}{\end{array}\right]}
\newcommand{\eq}{\begin{equation}\begin{array}{lclllllllllllllll}}
\newcommand{\ee}{\end{array}\end{equation}}
\newcommand{\bmt}{\left[ \begin{array}{ccccccccc}}
\newcommand{\emt}{\end{array}\right]}
\newcommand{\bea}{\begin{eqnarray}}
\newcommand{\eea}{\end{eqnarray}}
\newcommand{\bean}{\begin{eqnarray*}}
\newcommand{\eean}{\end{eqnarray*}}
\newtheorem{lem}[theorem]{Lemma}
\theoremstyle{definition}
\newtheorem{notes}{\bf Note}
\numberwithin{equation}{section}
\newcommand{\norm}[1]{\left\Vert#1\right\Vert}
\begin{document}

\title[Extremal ergodic measures and the finiteness property]{Extremal ergodic measures and the finiteness property of matrix semigroups}
\thanks{Project was supported partly by National Natural Science Foundation of China (Nos.~11071112 and 11071263) and in part by NSF DMS-0605181, 1021203 of the United States.}

\author{Xiongping Dai}
\address{Department of Mathematics, Nanjing University, Nanjing 210093, People's Republic of China}
\email{xpdai@nju.edu.cn}

\author{Yu Huang}
\address{Department of Mathematics, Zhongshan (Sun Yat-Sen) University, Guangzhou 510275, People's Republic of China}
\email{stshyu@mail.sysu.edu.cn}

\author{MingQing Xiao}
\address{Department of Mathematics, Southern Illinois University, Carbondale, IL 62901-4408, USA}
\email{mxiao@math.siu.edu}
\subjclass[2000]{Primary 15B52; Secondary 15A30, 15A18}

\date{June 10, 2011 and, in revised form, June 27, 2011.}

\keywords{The finiteness property, joint/generalized spectral radius, extremal probability, random product of matrices}
\begin{abstract}
Let $\bS=\{S_1,\ldots,S_K\}$ be a finite set of complex $d\times d$ matrices and
$\varSigma_{\!K}^+$ the compact space of all one-sided infinite sequences $i_{\bcdot}\colon\mathbb{N}\rightarrow\{1,\dotsc,K\}$.
An ergodic probability $\mu_*$ of the Markov shift $\theta\colon\varSigma_{\!K}^+\rightarrow\varSigma_{\!K}^+;\ i_{\bcdot}\mapsto i_{\bcdot+1}$,
is called ``extremal" for $\bS$, if
${\rho}(\bS)=\lim_{n\to\infty}\sqrt[n]{\norm{S_{i_1}\cdots S_{i_n}}}$ holds for $\mu_*$-a.e. $i_{\bcdot}\in\varSigma_{\!K}^+$, where $\rho(\bS)$ denotes the generalized/joint spectral radius of $\bS$. Using extremal norm and Kingman subadditive ergodic theorem, it is shown that $\bS$ has the spectral finiteness property (i.e. $\rho(\bS)=\sqrt[n]{\rho(S_{i_1}\cdots S_{i_n})}$ for some finite-length word $(i_1,\ldots,i_n)$) if and only if for some extremal measure $\mu_*$ of $\bS$, it has at least one periodic density point $i_{\bcdot}\in\varSigma_{\!K}^+$.
\end{abstract}

\maketitle
\section{Introduction}\label{sec1}
We consider an arbitrary finite set of complex, $d\times d$, matrices $\bS=\{S_1,\dotsc,S_K\}$, where $d,K$ both are integers with $2\le d,K<+\infty$. Let
$\varSigma_{\!K}^+$ be the compact topological space of all the one-sided infinite sequences $i_{\bcdot}\colon\mathbb{N}\rightarrow\{1,\dotsc,K\}$ with the product topology,
where $\mathbb{N}=\{1,2,\dotsc\}$.
Recall that the \emph{generalized spectral radius} of $\bS$, first introduced by Daubechies and Lagarias in \cite{DL}, is defined as
\bean
{\rho}(\bS)=\limsup_{n\to+\infty}\max_{i_{\bcdot}\in\varSigma_{\!K}^+}\sqrt[n]{\rho(S_{i_1}\dotsm S_{i_n})}
\quad\left(~=\sup_{n\ge1}\max_{i_{\bcdot}\in\varSigma_{\!K}^+}\sqrt[n]{\rho(S_{i_1}\dotsm S_{i_n})}\right).
\eean
Here $\rho(A)$ stands for the usual spectral radius for an arbitrary matrix $A\in\mathbb{C}^{d\times d}$.
Another critical characterization of all infinite
products of the matrices of $\bS$ is the so-called \emph{joint spectral radius} of $\bS$, appeared initially in Rota and Strang~\cite{RS60},
which is given as
\bean
\hat{\rho}(\bS)=\limsup_{n\to+\infty}\max_{i_{\bcdot}\in\varSigma_{\!K}^+}\sqrt[n]{\norm{S_{i_1}\dotsm S_{i_n}}}\quad
\left(~=\inf_{n\ge1}\max_{i_{\bcdot}\in\varSigma_{\!K}^+}\sqrt[n]{\norm{S_{i_1}\dotsm S_{i_n}}}\right),
\eean
where $\norm{\cdot}$ can be any matrix norm satisfying submultiplicativity (also called ring) property, i.e.,
$\norm{AB}\le \norm{A}\cdot\norm{B}$ for all $A,B\in\mathbb{C}^{d\times d}$.
According to the Berger-Wang formula \cite{BW}, it holds that
\bean
{\rho}(\bS)=\hat{\rho}(\bS).
\eean
The generalized/joint spectral radius has been the subject of substantial recent research interest in many pure and applied mathematical branches, such as matrix analysis, control theory, wavelets and so on. If one can find some finite-length word, say $(i_1,\dotsc,i_n)$, such that
${\rho}(\bS)=\sqrt[n]{\rho(S_{i_1}\dotsm S_{i_n})}$,
then $\bS$ is said to have \emph{the spectral finiteness property}. It has been known that the finiteness property does not hold in general. But what key condition implies the finiteness property remains unclear in the theory of joint spectral radius. A brief survey for some recent progresses regarding this can be found in Dai and Kozyakin~\cite[Section~1.2]{DK11}.

Although the finiteness property failed to exist, the idea is
still to be attractive and important due to algorithms for the
computation of the joint spectral radius must be implemented in
finite arithmetic. Thus criteria for determining if a given matrix family satisfies the finiteness property is critical for us to develop a decidable algorithm for the joint/generalized spectral radius.

In this note, we will study the spectral finiteness property of $\bS$ via ergodic theory. In \cite{DHX:ERA11}, Dai \textit{et al.} showed that there exists an ergodic measure $\mu_*$ of the one-sided Markov shift on $\varSigma_{\!K}^+$ such that
\bean
{\rho}(\bS)=\lim_{n\to+\infty}\sqrt[n]{\norm{S_{i_1}\dotsm S_{i_n}}}\quad \textrm{for }\mu_*\textrm{-a.e. }i_{\bcdot}\in\varSigma_{\!K}^+.
\eean
Any such measure is called \emph{extremal} for $\bS$. Our aim of this note is to show that $\bS$ has the finiteness property if and only if there is an extremal measure $\mu_*$ that has a periodic density point; as shown by our main theorem given in Section~\ref{sec4}.

\section{Preliminaries}\label{sec2}%
In this section, we will introduce some preliminary notations and lemmas needed for the Main theorem in Section~\ref{sec4}.
\subsection{Ergodic measures}\label{sec2.1}
Let $K\ge2$ be an arbitrary integer. We equip the finite set $\{1,\ldots,K\}$ with the usual discrete topology. Let $\varSigma_{\!K}^+$ be the one-sided symbolic space that consists of all the one-sided infinite sequences
$i_{\bcdot}\colon\mathbb{N}\rightarrow\{1,\dotsc,K\}$.
From the classical Tychnoff product theorem, $\varSigma_{\!K}^+$ is a compact topological space.
For any word $({i}_1^*,\dotsc,{i}_n^*)\in\{1,\dotsc,K\}^n$ of finite-length $n\ge1$, it is well known that the corresponding cylinder set
\bean
[{i}_1^*,\dotsc,{i}_n^*]=\left\{i_{\bcdot}\in\varSigma_{\!K}^+\,|\, i_1={i}_1^*,\dotsc, i_n={i}_n^*\right\}
\eean
is an open and closed subset of $\varSigma_{\!K}^+$. One can easily check that the set of all the cylinders forms a base for the product topology of $\varSigma_{\!K}^+$.
Then, the classical one-sided Markov shift transformation given below
\bean
\theta\colon\varSigma_{\!K}^+\rightarrow\varSigma_{\!K}^+;\quad i_{\bcdot}\mapsto i_{\bcdot+1}
\eean
is continuous. By $\mathscr{B}$ we denote the Borel $\sigma$-field of the compact symbolic space $\varSigma_{\!K}^+$. A probability measure $\mu$ on $(\varSigma_{\!K}^+,\mathscr{B})$ is called \emph{$\theta$-invariant}, if $\mu(\theta^{-1}(B))=\mu(B)$ for all $B\in\mathscr{B}$. A $\theta$-invariant measure $\mu$ is called \emph{ergodic} if the only members $B$ of $\mathscr{B}$ with $\mu(B\triangle\theta^{-1}(B))=0$ satisfy $\mu(B)=0$ or $\mu(B)=1$. See \cite{NS, W82}.

For any probability measure $\mu$ on $(\varSigma_{\!K}^+,\mathscr{B})$, a point $i_{\bcdot}^*\in\varSigma_{\!K}^+$ is called a \emph{density point} of $\mu$, if for every open neighborhood $U$ around $i_{\bcdot}^*$, it holds that $\mu(U)>0$. Then, $i_{\bcdot}^*=(i_n^*)_{n=1}^{+\infty}$ is a density point of $\mu$ if and only if $\mu([i_1^*,\dotsc,i_n^*])>0$ for all $n\ge1$. Let us denote by $\mathrm{supp}(\mu)$ the set of all the density points of $\mu$ in $\varSigma_{\!K}^+$.

The following lemma  is important for our later discussion.

\begin{lem}[\cite{NS,W82}]\label{lem2.1}
Let $\mu$ be a $\theta$-ergodic probability on $(\varSigma_{\!K}^+,\mathscr{B})$. Then, $\mathrm{supp}(\mu)$, called the ``support of $\mu$", is the minimal $\theta$-invariant closed set of $\mu$-measure $1$.
\end{lem}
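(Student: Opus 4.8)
Write $M:=\mathrm{supp}(\mu)$. The plan is to check in turn that $M$ is closed, that $\mu(M)=1$, that $M$ is contained in every closed set of full measure, and that $\theta(M)=M$; the last two facts together give exactly the asserted minimality among $\theta$-invariant closed sets of measure $1$. None of the steps is deep, and the only place where something must be used carefully is the countable base of cylinders.

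That $M$ is closed I would get by showing its complement is open: if $j_{\bcdot}\notin M$ there is an open $U\ni j_{\bcdot}$ with $\mu(U)=0$, and then $U$ is a measure-zero neighborhood of each of its own points, so $U\subseteq\varSigma_{\!K}^+\setminus M$. For $\mu(M)=1$ I would invoke that the cylinders form a \emph{countable} base: each point outside $M$ lies in some measure-zero open set and hence, shrinking to a basic neighborhood inside it, in some cylinder of measure zero, so $\varSigma_{\!K}^+\setminus M$ is a countable union of null cylinders and has measure zero by subadditivity. This second-countability step is the one genuinely load-bearing ingredient for full measure.

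Minimality is then immediate: if $C$ is closed with $\mu(C)=1$, then $\varSigma_{\!K}^+\setminus C$ is an open null set, so each of its points has a measure-zero neighborhood and lies outside $M$, whence $M\subseteq C$; in particular $M$ is the smallest closed set of full measure, a fortiori the smallest $\theta$-invariant one. For invariance I would combine continuity of $\theta$ with the invariance of $\mu$: for $i_{\bcdot}\in M$ and any open $V\ni\theta(i_{\bcdot})$, the set $\theta^{-1}(V)$ is an open neighborhood of $i_{\bcdot}$, so $\mu(V)=\mu(\theta^{-1}(V))>0$ and $\theta(i_{\bcdot})\in M$, giving $\theta(M)\subseteq M$. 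Since $\theta(M)$ is compact, hence closed, and $\mu(\theta(M))=\mu(\theta^{-1}(\theta(M)))=1$ because $\theta^{-1}(\theta(M))\supseteq M$, the minimality just established forces $M\subseteq\theta(M)$, so in fact $\theta(M)=M$.

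The main thing to watch is that the full-measure claim really needs the countable base, and that the invariance step needs continuity and measure-invariance paired together rather than either alone; I expect the former to be the step most easily glossed over. I would also note that ergodicity is not actually used in this lemma—the argument applies verbatim to any $\theta$-invariant probability—so the hypothesis is stated here only to match the setting of the later sections.
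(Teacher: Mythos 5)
Your argument is correct and complete: closedness of the support, full measure via the countable base of cylinders, minimality among all closed full-measure sets (which is stronger than the stated minimality among $\theta$-invariant ones), and $\theta(M)=M$ from continuity plus measure-invariance are all handled properly, and your observation that ergodicity is not needed is accurate. The paper offers no proof of its own, citing \cite{NS,W82}, and your argument is precisely the standard one found there.
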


An infinite sequence $i_{\bcdot}=(i_n)_{n=1}^{+\infty}\in\varSigma_{\!K}^+$ is said to be \emph{periodic of period} $\pi\ge1$, if $i_{\bcdot+\pi}=i_{\bcdot}$, namely, $i_{n+\pi}=i_n$ for all $n\in\mathbb{N}$. Let $\delta_{i_{\bcdot}}$ be the Dirac probability measure concentrated at the point $i_{\bcdot}\in\varSigma_{\!K}^+$. Then, for a periodic point $i_{\bcdot}$ of period $\pi$,
\bean
\mathbb{P}_{i_{\bcdot}}:=\pi^{-1}\left(\delta_{i_{\bcdot}}+\cdots+\delta_{i_{\bcdot+\pi-1}}\right)\quad\left(~=\pi^{-1}\left(\delta_{i_{\bcdot+1}}+\cdots+\delta_{i_{\bcdot+\pi}}\right)\right)
\eean
is the unique $\theta$-ergodic probability measure whose support can be readily seen by
\bean
\mathrm{supp}(\mathbb{P}_{i_{\bcdot}})=\{i_{\bcdot},\dotsc,i_{\bcdot+\pi-1}\}\quad\left(~=\{i_{\bcdot+1},\dotsc,i_{\bcdot+\pi}\}\right).
\eean
Thus, in the periodic case, the support is quite simple. However, for a general $\theta$-ergodic probability measure $\mu$, the topological structure of its support may become very complicated and  the $\mu$-almost sure stability of linear switched dynamical system induced by $\bS$ relies on this structure, for example, see \cite{DHX08, DHX:aut11}. Our results obtained in this note further show that it is closely related to the finiteness property of a finite set of matrices.

Given two sequences $\sigma_{\!\bcdot}=(i_n)_{n=1}^{+\infty}$ and $\xi_{\bcdot}=(i_n^\prime)_{n=1}^{+\infty}$ belonging to $\varSigma_{\!K}^+$, $\xi_{\bcdot}$ is called an \emph{$\omega$-limit point} of $\sigma_{\!\bcdot}$ under $\theta$, provided that there is an infinite increasing subsequence $\{n_\ell\}_{\ell=1}^{+\infty}$ of $\mathbb{N}$ satisfying
\bean
\xi_{\bcdot}=\lim_{\ell\to+\infty}\theta^{n_\ell}(\sigma_{\!\bcdot})\quad\left(~=\lim_{\ell\to+\infty}\sigma_{\!{\bcdot+n_\ell}}\right).
\eean
Below is a lemma needed in the sequel.

\begin{lem}\label{lem2.2}
Let $\sigma_{\!\bcdot}\in\mathrm{supp}(\mu)$ for a $\theta$-ergodic probability measure $\mu$ on $\varSigma_{\!K}^+$, and let $\xi_{\bcdot}\in\varSigma_{\!K}^+$ be a periodic sequence. If $\xi_{\bcdot}$ is an $\omega$-limit point of $\sigma_{\!\bcdot}$ under $\theta$, then $\xi_{\bcdot}$ belongs to $\mathrm{supp}(\mu)$.
\end{lem}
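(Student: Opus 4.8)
The plan is to derive the conclusion from Lemma~\ref{lem2.1}, which tells us that $\mathrm{supp}(\mu)$ is a closed $\theta$-invariant set. The single extra ingredient I need is that the support is \emph{forward} invariant, i.e. $\theta(\mathrm{supp}(\mu))\subseteq\mathrm{supp}(\mu)$. Granting this, the argument is immediate: since $\sigma_{\!\bcdot}\in\mathrm{supp}(\mu)$, forward invariance gives $\theta^{n}(\sigma_{\!\bcdot})\in\mathrm{supp}(\mu)$ for every $n\ge0$; and as $\xi_{\bcdot}=\lim_{\ell\to\infty}\theta^{n_\ell}(\sigma_{\!\bcdot})$ is a limit of points of the forward orbit of $\sigma_{\!\bcdot}$, the closedness of $\mathrm{supp}(\mu)$ forces $\xi_{\bcdot}\in\mathrm{supp}(\mu)$. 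In this route the hypothesis that $\xi_{\bcdot}$ be periodic is never actually used; it is harmless and merely reflects the way the lemma is applied in Section~\ref{sec4}.

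To establish forward invariance I would argue by contradiction at the level of a single point. Suppose $x\in\mathrm{supp}(\mu)$ but $\theta(x)\notin\mathrm{supp}(\mu)$. Then there is an open neighbourhood $V$ of $\theta(x)$ with $\mu(V)=0$. Because $\theta$ is continuous, $\theta^{-1}(V)$ is open and contains $x$, while the $\theta$-invariance of $\mu$ yields $\mu(\theta^{-1}(V))=\mu(V)=0$. This contradicts the fact that every open neighbourhood of the density point $x$ has positive $\mu$-measure. Hence $\theta(\mathrm{supp}(\mu))\subseteq\mathrm{supp}(\mu)$, as required.

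As a self-contained alternative that does not invoke Lemma~\ref{lem2.1}, one can work directly with the density-point characterization, namely that $\xi_{\bcdot}\in\mathrm{supp}(\mu)$ iff $\mu([\xi_1,\dotsc,\xi_m])>0$ for all $m\ge1$. Fix $m$. For each $\ell$ one has the cylinder inclusion $[\sigma_1,\dotsc,\sigma_{n_\ell+m}]\subseteq\theta^{-n_\ell}\big([\sigma_{n_\ell+1},\dotsc,\sigma_{n_\ell+m}]\big)$, so $\theta$-invariance of $\mu$ together with monotonicity gives $\mu\big([\sigma_{n_\ell+1},\dotsc,\sigma_{n_\ell+m}]\big)\ge\mu\big([\sigma_1,\dotsc,\sigma_{n_\ell+m}]\big)>0$, the last inequality because $\sigma_{\!\bcdot}\in\mathrm{supp}(\mu)$. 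Since $\theta^{n_\ell}(\sigma_{\!\bcdot})\to\xi_{\bcdot}$, for all large $\ell$ the block $(\sigma_{n_\ell+1},\dotsc,\sigma_{n_\ell+m})$ coincides with $(\xi_1,\dotsc,\xi_m)$, whence $\mu([\xi_1,\dotsc,\xi_m])>0$. Letting $m$ range over $\mathbb{N}$ shows $\xi_{\bcdot}\in\mathrm{supp}(\mu)$.

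There is no serious obstacle here. The only point requiring a moment's care is the step $\mu(\theta^{-1}(V))=\mu(V)$ (respectively the cylinder inclusion together with $\mu(\theta^{-n_\ell}(\cdot))=\mu(\cdot)$), which rests solely on the $\theta$-invariance of $\mu$ and not on ergodicity, and on correctly identifying $\theta^{n}(\sigma_{\!\bcdot})$ with the shifted sequence $\sigma_{\!\bcdot+n}$ so that the matching of finite blocks translates into positivity of the relevant cylinder measures.
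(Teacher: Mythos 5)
Your first argument is correct and is essentially the paper's own proof: the paper likewise invokes Lemma~\ref{lem2.1} to say $\mathrm{supp}(\mu)$ is closed and $\theta$-invariant, hence contains the orbit closure of $\sigma_{\!\bcdot}$ and therefore $\xi_{\bcdot}$; you merely spell out the forward-invariance step that the paper leaves implicit, and you correctly observe that periodicity of $\xi_{\bcdot}$ is never used. The self-contained cylinder-set alternative is also sound, but it is a bonus rather than a different essential route.
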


\begin{proof}
Let us consider the closure of the orbit of $\theta$ passing $\sigma_{\!\bcdot}$
\bean
\mathcal{O}=\mathrm{cls}\{\sigma_{\!\bcdot+n}\,|\,n=0,1,2,\dotsc\}.
\eean
Clearly, $\xi_{\bcdot}$ belongs to $\mathcal{O}$. Since by Lemma~\ref{lem2.1} $\mathrm{supp}(\mu)$ is $\theta$-invariant, we can get $\mathcal{O}\subseteq\mathrm{supp}(\mu)$. This proves Lemma~\ref{lem2.2}.
\end{proof}
\subsection{Subadditive ergodic theorem}
If $f\colon X\rightarrow\mathbb{R}\cup\{-\infty\}$ is a function, we simply put $f^+(x)=\max\{0,f(x)\}$ for any $x\in X$. We need to employ another classical ergodic theorem stated as follows.

\begin{Kingman}[\cite{King,W82}]
Let $T$ be a measure-preserving transformation of a probability space $(X,\mathscr{B},\mu)$ into itself.
Let $\{f_n\}_{n\ge1}$ be a sequence of measurable functions
$$f_n\colon X\rightarrow\mathbb{R}\cup\{-\infty\}$$
satisfying the conditions:
\begin{enumerate}
\item[$\mathrm{(1)}$] $f_1^+\in L^1(\mu)$;

\item[$\mathrm{(2)}$] for each $\ell,m\ge1$,
$f_{\ell+m}(x)\le f_\ell(x)+f_m(T^\ell(x))$
for $\mu$-a.e. $x\in X$.
\end{enumerate}
Then, there exists a measurable function
\bean
\bar{f}\colon X\rightarrow\mathbb{R}\cup\{-\infty\}
\eean
such that
\begin{align*}
&{\bar{f}}^+\in L^1(\mu),\quad \bar{f}\circ T=\bar{f}\textrm{ a.e.},\quad \lim_{n\to+\infty}\frac{1}{n}f_n=\bar{f}\textrm{ a.e.},\\
\intertext{and}
&\lim_{n\to+\infty}\frac{1}{n}\int_Xf_n\,d\mu=\inf_{n\ge1}\frac{1}{n}\int_Xf_n\,d\mu=\int_X\bar{f}\,d\mu.
\end{align*}
\end{Kingman}

\begin{notes}
If $\mu$ is $T$-ergodic, then $\bar{f}(x)\equiv\mathrm{constant}$ for $\mu$-a.e. $x\in X$.
\end{notes}
\begin{notes}
This result in fact implies the classical Multiplicative Ergodic Theorem.
\end{notes}
\subsection{A realization of the joint spectral radius via ergodic measures}

The foundation of our argument later is the following realization theorem of the joint (or generalized) spectral radius:

\begin{theorem}[\cite{DHX:ERA11}]\label{thm2.3}
Let $\bS=\{S_1,\ldots,S_K\}\subset\mathbb{C}^{d\times d}$ be arbitrary. Then, there
is at least one ergodic Borel probability measure $\mu_*$ of the one-sided Markov shift
$\theta\colon\varSigma_{\!K}^+\rightarrow\varSigma_{\!K}^+$, which is extremal for $\bS$.
\end{theorem}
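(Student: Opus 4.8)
The plan is to realize $\rho(\bS)$ as the maximal ``top Lyapunov exponent'' over $\theta$-invariant measures, produced by Kingman's theorem, and then to pass to an ergodic maximizer by ergodic decomposition. First I would set up the subadditive data: for each $n\ge1$ define $f_n\colon\varSigma_{\!K}^+\to\Real\cup\{-\infty\}$ by $f_n(i_{\bcdot})=\log\norm{S_{i_1}\dotsm S_{i_n}}$. Submultiplicativity of $\norm{\cdot}$ gives $f_{\ell+m}(i_{\bcdot})\le f_\ell(i_{\bcdot})+f_m(\theta^\ell(i_{\bcdot}))$ for all $\ell,m\ge1$ and every $i_{\bcdot}$, while $f_1^+$ is bounded (it takes finitely many finite values), so $f_1^+\in L^1(\mu)$ for every probability $\mu$. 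Thus $\{f_n\}$ meets the hypotheses of Kingman's theorem for any $\theta$-invariant $\mu$, and I may set
\[
\Lambda(\mu)=\inf_{n\ge1}\frac1n\int_{\varSigma_{\!K}^+}f_n\,d\mu=\lim_{n\to+\infty}\frac1n\int_{\varSigma_{\!K}^+}f_n\,d\mu .
\]
When $\mu$ is ergodic, Kingman's theorem together with the Note following it gives $\lim_n\frac1n f_n(i_{\bcdot})=\Lambda(\mu)$ for $\mu$-a.e.\ $i_{\bcdot}$; hence $\mu$ is extremal for $\bS$ precisely when $\Lambda(\mu)=\log\rho(\bS)$. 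It therefore suffices to produce an ergodic $\mu_*$ with $\Lambda(\mu_*)=\log\rho(\bS)$.

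Next I would establish the variational identity $\sup_{\mu}\Lambda(\mu)=\log\rho(\bS)$, the supremum running over all $\theta$-invariant probabilities. For the upper bound, $\frac1n\int f_n\,d\mu\le\frac1n\log\max_{(i_1,\dotsc,i_n)}\norm{S_{i_1}\dotsm S_{i_n}}$, and taking the infimum in $n$ yields $\Lambda(\mu)\le\log\hat{\rho}(\bS)$. For the lower bound I would test against periodic orbit measures: given a word $(i_1,\dotsc,i_n)$, let $p$ be the period-$n$ point repeating it and $\mathbb{P}_{p}$ the associated $\theta$-ergodic measure. Evaluating the Kingman limit along $p$ and invoking Gelfand's formula $\lim_k\norm{M^k}^{1/k}=\rho(M)$ with $M=S_{i_1}\dotsm S_{i_n}$ gives $\Lambda(\mathbb{P}_{p})=\frac1n\log\rho(S_{i_1}\dotsm S_{i_n})$. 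Supremizing over all finite words and all $n$, and reading off the definition of the generalized spectral radius, yields $\sup_{\mu}\Lambda(\mu)\ge\sup_p\Lambda(\mathbb{P}_p)=\log\rho(\bS)$. The Berger--Wang identity $\rho(\bS)=\hat{\rho}(\bS)$ then closes the two bounds into an equality.

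Finally I would extract the ergodic measure. Each $f_n$ is continuous and bounded above on the compact metrizable space $\varSigma_{\!K}^+$, so $\mu\mapsto\frac1n\int f_n\,d\mu$ is weak-$*$ upper semicontinuous; being an infimum of such maps, $\Lambda$ is upper semicontinuous. Since the set of $\theta$-invariant probabilities is weak-$*$ compact, the value $\log\rho(\bS)$ is attained at some invariant $\mu_{\max}$. This $\mu_{\max}$ need not be ergodic, which is the main obstacle. To overcome it I would apply Kingman to $\mu_{\max}$ itself, obtaining the invariant limit $\bar f=\lim_n\frac1n f_n$ with $\int\bar f\,d\mu_{\max}=\Lambda(\mu_{\max})=\log\rho(\bS)$, and then invoke the ergodic decomposition $\mu_{\max}=\int\mu_\omega\,d\tau(\omega)$: on almost every ergodic component $\bar f\equiv\Lambda(\mu_\omega)$, whence $\log\rho(\bS)=\int\Lambda(\mu_\omega)\,d\tau(\omega)$ with each $\Lambda(\mu_\omega)\le\log\rho(\bS)$. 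Therefore $\Lambda(\mu_\omega)=\log\rho(\bS)$ for $\tau$-a.e.\ $\omega$, and any such ergodic component furnishes the desired extremal measure $\mu_*$. The delicate point throughout is this last passage from an arbitrary invariant maximizer to an ergodic one, which hinges on the fact that the Kingman limit $\bar f$ is constant on each ergodic piece and equals that piece's exponent.
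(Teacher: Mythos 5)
Your argument is correct and complete, but note that the paper itself offers no proof of Theorem~\ref{thm2.3}: it is imported verbatim from \cite{DHX:ERA11}, so there is no internal proof to compare against. In the cited source the existence of an extremal ergodic measure is obtained from the semi-uniform subadditive ergodic theorem of Schreiber and Sturman--Stark (the unused theorem environment \texttt{semi} declared in this paper's preamble is a vestige of that route), which asserts for a continuous subadditive cocycle on a compact system that $\inf_n\frac1n\max_x f_n(x)$ equals the maximum of the Kingman exponents over ergodic measures, and the maximum is attained; combined with Berger--Wang this yields the theorem in one stroke. Your proof unwinds exactly the content of that black box: the upper bound $\Lambda(\mu)\le\log\hat\rho(\bS)$, the lower bound via periodic-orbit measures and Gelfand's formula, weak-$*$ compactness plus upper semicontinuity of $\Lambda$ to get an invariant maximizer, and the ergodic decomposition together with the $T$-invariance of the Kingman limit $\bar f$ to pass to an ergodic maximizer. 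The two routes are mathematically equivalent; yours is more self-contained (it needs only Kingman, Berger--Wang, and ergodic decomposition rather than the semi-uniform theorem as an external input), at the cost of having to verify the upper semicontinuity of $\mu\mapsto\int f_n\,d\mu$ for the extended-real-valued, locally constant $f_n$ and the measurability issues in the decomposition step, both of which you handle correctly. The degenerate case $\rho(\bS)=0$, where $f_n\equiv-\infty$ eventually and every ergodic measure is extremal, is the only point you leave implicit, and it causes no difficulty.
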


The extremal measure is useful for the study of dynamical behaviors of $\bS$, such as the rotation number of extremal trajectories and extremal switching sequences of $\bS$, as is shown in \cite{Koz07, Dai:jde11}.
\section{Extremal norms}\label{sec3}
Let $\bS=\{S_1,\dotsc,S_K\}\subset\mathbb{C}^{d\times d}$ where $d\ge2$ is an integer. As a special case of the main result of Rota and Strang~\cite{RS60}, there holds
\begin{equation*}
\hat{\rho}(\bS)=\inf_{\norm{\cdot}\in\mathcal{N}}\max_{1\le k\le K}\norm{S_k}
\end{equation*}
where $\mathcal{N}$ denotes the set of all possible matrix norms for $\mathbb{C}^{d\times d}$ induced by vector norms on $\mathbb{C}^d$; also see \cite{El, SWP} for a short proof. Hence, an important problem is whether or not the above infimum is actually attained by some induced matrix norm $\norm{\cdot}$. For this, a norm $\pmb{\|}\cdot\pmb{\|}$ on $\mathbb{C}^d$ satisfying the condition
\begin{equation*}
\hat{\rho}(\bS)=\max_{1\le k\le K}\pmb{\|}S_i\pmb{\|}\quad\left(~=\max_{i_{\bcdot}\in\varSigma_{\!K}^+}\sqrt[n]{\pmb{\|}S_{i_1}\cdots S_{i_n}\pmb{\|}}\; \forall n\ge1\right),
\end{equation*}
is called an \emph{extremal norm} of $\bS$. From Barabanov's extremal norm theorem~\cite{Bar}, one can see that if $\bS$ is ``irreducible", i.e., there are no nontrivial, common, and proper subspaces of $\mathbb{C}^{d}$ for each member $S_k$ of $\bS$, then there exists an extremal norm for $\bS$ on $\mathbb{C}^{d}$. Here the irreducibility is crucial for Barabanov's theorem as shown by a simple counterexample example
\bean
\bS=\left\{\left[\begin{matrix}1&1\\ 0&1\end{matrix}\right]\right\},
\eean
which does not have any extremal norms. However, there always exists a lower-dimensional extremal norm.

\begin{theorem}\label{thm3.1}
For $\bS=\{S_1,\dotsc,S_K\}\subset\mathbb{C}^{d\times d}$, there exists an invariant linear subspace $\mathbb{E}$ of $\mathbb{C}^d$ on which there is a norm $\pmb{|}\cdot\pmb{|}$ such that
\bean
\hat{\rho}(\bS)=\hat{\rho}(\bS\!\upharpoonright\!\mathbb{E})=\max_{1\le k\le K}\pmb{|}{{S_k}}\!\upharpoonright\!\mathbb{E}\pmb{|}.
\eean
\end{theorem}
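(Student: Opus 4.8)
The plan is to induct on the dimension $d$, using the dichotomy between irreducibility and reducibility of $\bS$ and taking Barabanov's extremal norm theorem as the base case. If $\bS$ is irreducible on $\mathbb{C}^d$, then that theorem produces a norm on $\mathbb{C}^d$ that is extremal, so one simply takes $\mathbb{E}=\mathbb{C}^d$ and is done; in particular this covers $d=1$. Thus all the work lies in the reducible case, where there is a nonzero common invariant subspace $W\subsetneq\mathbb{C}^d$.

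The first key step is a block formula for the joint spectral radius. Writing each $S_k$ in block-triangular form relative to $W$ and a complement, with diagonal blocks the restriction $\bS\!\upharpoonright\!W$ and the induced quotient action $\bar{\bS}$ on $\mathbb{C}^d/W$, I would establish
\[
\hat{\rho}(\bS)=\max\set{\hat{\rho}(\bS\!\upharpoonright\!W),\ \hat{\rho}(\bar{\bS})}.
\]
The inequality ``$\ge$'' is immediate, since each diagonal block of a product $S_{i_1}\cdots S_{i_n}$ has norm at most that of the whole product; for ``$\le$'' I would use that the ordinary spectral radius of a block-triangular matrix equals the larger of the spectral radii of its two diagonal blocks, and then pass from spectral radii of long products to $\hat{\rho}$ by invoking the Berger--Wang identity ${\rho}(\bS)=\hat{\rho}(\bS)$.

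With this formula the induction splits in two. If $\hat{\rho}(\bS\!\upharpoonright\!W)=\hat{\rho}(\bS)$, I apply the induction hypothesis to the lower-dimensional family $\bS\!\upharpoonright\!W$ and obtain an invariant subspace $\mathbb{E}\subseteq W$ with an extremal norm and $\hat{\rho}(\bS\!\upharpoonright\!\mathbb{E})=\hat{\rho}(\bS)$; since $\mathbb{E}$ is then $\bS$-invariant in $\mathbb{C}^d$, it is the desired subspace. The genuinely harder case is $\hat{\rho}(\bS\!\upharpoonright\!W)<\hat{\rho}(\bS)=\hat{\rho}(\bar{\bS})$, in which $\hat{\rho}(\bS)$ is realized only on the quotient. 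Here I apply the induction hypothesis to $\bar{\bS}$ on $\mathbb{C}^d/W$, producing an invariant subspace $\bar{\mathbb{E}}\subseteq\mathbb{C}^d/W$ carrying an extremal norm with $\hat{\rho}(\bar{\bS}\!\upharpoonright\!\bar{\mathbb{E}})=\hat{\rho}(\bS)$, and set $\mathbb{E}=\pi^{-1}(\bar{\mathbb{E}})$ for the quotient map $\pi\colon\mathbb{C}^d\to\mathbb{C}^d/W$. This $\mathbb{E}$ is a genuine $\bS$-invariant subspace containing $W$, with $\hat{\rho}(\bS\!\upharpoonright\!\mathbb{E})=\hat{\rho}(\bS)$ by the block formula once more.

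The main obstacle is exactly this last case: the extremal norm furnished by induction lives on the quotient $\bar{\mathbb{E}}=\mathbb{E}/W$, not on a subspace, so it must be converted into an actual norm on $\mathbb{E}$. I would do this through the product-boundedness criterion. Normalizing by $r=\hat{\rho}(\bS)$ and writing the normalized restriction to $\mathbb{E}$ as $\left(\begin{smallmatrix}A_k&B_k\\ 0&C_k\end{smallmatrix}\right)$, the bottom blocks satisfy $\hat{\rho}(\set{A_k})<1$ while the top blocks $\set{C_k}$ are product bounded, the latter because $\bar{\mathbb{E}}$ already carries an extremal norm. In any product the $(1,1)$-block $A_{i_1}\cdots A_{i_n}$ decays geometrically, the $(2,2)$-block stays bounded, and the $(1,2)$-block $\sum_{j}A_{i_1}\cdots A_{i_{j-1}}B_{i_j}C_{i_{j+1}}\cdots C_{i_n}$ is dominated by a convergent geometric series precisely because of the strict gap $\hat{\rho}(\set{A_k})<1$. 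Hence all normalized products on $\mathbb{E}$ are uniformly bounded, and $\pmb{|}v\pmb{|}:=\sup_{n\ge0,\,i_{\bcdot}}\norm{r^{-n}S_{i_1}\cdots S_{i_n}v}$ defines a norm on $\mathbb{E}$ with $\pmb{|}S_k\!\upharpoonright\!\mathbb{E}\pmb{|}\le r$; combined with the Rota--Strang lower bound $\hat{\rho}(\bS\!\upharpoonright\!\mathbb{E})\le\max_k\pmb{|}S_k\!\upharpoonright\!\mathbb{E}\pmb{|}$ this forces $\max_k\pmb{|}S_k\!\upharpoonright\!\mathbb{E}\pmb{|}=r=\hat{\rho}(\bS)$, closing the induction. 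The only delicate points are the verification of the block formula through Berger--Wang and the convergence of the off-diagonal sum, both reducing to routine estimates once the gap $\hat{\rho}(\set{A_k})<1$ is secured.
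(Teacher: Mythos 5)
Your proof is correct, but it takes a genuinely different route from the paper's. The paper does not induct on dimension: it normalizes by $\rho(\bS)$, applies Elsner's reduction theorem to obtain in one step a simultaneous block-triangularization $P^{-1}\widehat{S}_kP$ all of whose diagonal block families generate bounded semigroups, selects a consecutive pair of blocks with joint spectral radii $<1$ and $=1$, asserts (citing Wirth's Lemma~6.2) that the resulting two-block family $\bF$ generates a bounded semigroup, and then invokes the general principle that a product-bounded family with $\hat{\rho}=1$ admits an extremal norm, reading off $\mathbb{E}=P^{-1}(\mathbb{C}^{d_1+d_2}\times\{\mathbf{0}\})$. Your induction on $d$ --- with Barabanov's theorem as the base case, the Berger--Wang block identity $\hat{\rho}(\bS)=\max\set{\hat{\rho}(\bS\!\upharpoonright\!W),\hat{\rho}(\bar{\bS})}$ as the splitting device, and the explicit geometric-series estimate plus the sup-construction $\pmb{|}v\pmb{|}=\sup\norm{r^{-n}S_{i_1}\cdots S_{i_n}v}$ to lift the quotient norm --- arrives at the same essential two-block configuration (one diagonal family strictly below $r$, the other product bounded), but builds it recursively rather than all at once. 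Your version buys self-containedness: you actually prove the off-diagonal boundedness that the paper only asserts, and you avoid Elsner's theorem and Wirth's lemma; the paper's version buys the full simultaneous triangularization, which it reuses in its Lemma~3.2. Two small points: your labels ``bottom''/``top'' for the blocks $A_k$ and $C_k$ are swapped relative to the matrix you display, and you should dispose of the degenerate case $\hat{\rho}(\bS)=0$ (where the normalization by $r^{-1}$ is unavailable) at the outset, as the paper does by taking $\mathbb{E}=\{\mathbf{0}\}$; in your induction that case only ever falls into your first branch, so nothing actually breaks, but it deserves a sentence.
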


\begin{proof}
If $\rho(\bS)=0$ then we simply let $\mathbb{E}=\{\mathbf{0}\}$ where $\mathbf{0}$ is the origin of $\mathbb{C}^d$. So, it may be assumed, without loss of generality, that $\rho(\bS)>0$.
We denote by $\widehat{\bS}$ the set $\{S_1/\rho(\bS),\dotsc, S_K/\rho(\bS)\}\subset\mathbb{C}^{d\times d}$. Clearly, $\rho(\widehat{\bS})=1$.

If the multiplicative semigroup ${\widehat{\bS}}^+$, generated by $\widehat{\bS}$, is bounded in $\mathbb{C}^{d\times d}$, then there exists an extremal norm for $\bS$ on $\mathbb{C}^d$ (see, e.g., \cite{Wirth02}) and we define $\mathbb{E}=\mathbb{C}^d$ in this case. Otherwise, suppose that ${\widehat{\bS}}^+$ is unbounded. Applying Elsner's reduction theorem~\cite{El} repeatedly, there exists a nonsingular matrix $P\in\mathbb{C}^{d\times d}$ and $2\le r\le d$ such that
\bean
P^{-1}{\widehat{S}}_kP=\left[\begin{matrix}\widetilde S_k^{(1)}&0&\cdots&0\\ \spadesuit_k&\widetilde S_k^{(2)}&\cdots&0\\\vdots&\vdots&\ddots&\vdots\\ * &*&\cdots&\widetilde S_k^{(r)}\end{matrix} \right]
\eean
where $\widetilde{S}_k^{(j)}$ is $d_j\times d_j$ for $1\le j\le r$, $d_1+\cdots+d_r=d$ and $\widetilde{\bS}(j):=\left\{\widetilde{S}_1^{(j)},\dotsc,\widetilde{S}_K^{(j)}\right\}$
generates a bounded semigroup ${\widetilde{\bS}(j)}^+$ in $\mathbb{C}^{d_j\times d_j}$. Let us notice here that
\begin{equation*}\rho(\widehat{\bS})=1=\max_{1\le j\le r} \rho(\widetilde{\bS}(j)).\end{equation*}
Thus there exists at least one $j$ such that
$\rho(\widetilde{\bS}(j))=1$. Now, from \cite[Lemma~6.2]{Wirth02} one can obtain the desired result. To be self-contained, we provide a detailed proof here.

Without loss of generality, we may assume that $\rho(\widetilde{\bS}(1))<1$ and $\rho(\widetilde{\bS}(2))=1$; this is because the other cases can always be reduced to this one. We now write
\bean
\bF=\left\{F_k=\left[\begin{matrix}\widetilde S_k^{(1)}&0\\ \spadesuit_k&\widetilde S_k^{(2)}\end{matrix} \right]\colon 1\le k\le K\right\}.
\eean
Then, one can easily see that $\rho(\bF)=1$ and the semigroup $\bF^+$ generated by $\bF$ is bounded in $\mathbb{C}^{(d_1+d_2)\times(d_1+d_2)}$. Then it follows that there is an extremal norm $\pmb{\|}\cdot\pmb{\|}$ for $\bF$ on $\mathbb{C}^{(d_1+d_2)\times(d_1+d_2)}$.
This implies that
\bean
\hat{\rho}(\bS)=\sup_{1\le k\le K}\pmb{\|}(P^{-1}S_kP)\!\upharpoonright\!\mathbb{C}^{d_1+d_2}\pmb{\|}.
\eean
Finally, let $\mathbb{E}=P^{-1}(\mathbb{C}^{d_1+d_2}\times\{\mathbf{0}\})$ where $\mathbf{0}$ is the origin of $\mathbb{C}^{d-d_1-d_2}$ and define
\begin{equation*}
\pmb{|}x\pmb{|}=\pmb{\|}P(x)\pmb{\|}\quad\left(~=\pmb{\|}xP\pmb{\|}\right)\quad \forall x\in\mathbb{E},
\end{equation*}
where $x\in\mathbb{C}^d$ is viewed as a row vector. Clearly, such $\mathbb{E}$ and $\pmb{|}\cdot\pmb{|}$ satisfy the requirement.

This completes the proof of Theorem~\ref{thm3.1}.
\end{proof}

Another generalization of Barabanov's theorem can be found in \cite{Dai-JMAA}. In addition, to prove our main theorem, we will need a reduction theorem of ergodic version.

\begin{lem}[{\cite[Lemma~3.5]{DHX:aut11}}]\label{lem3.2}
Let $\bS=\{S_1,\dotsc,S_K\}\subset\mathbb{C}^{d\times d}$ and $\mu_*$ an extremal ergodic probability for $\bS$. Then, there exists a nonsingular matrix $P\in\mathbb{C}^{d\times d}$ and $2\le r\le d$ such that
\bean
P^{-1}S_kP=\left[\begin{matrix}\widetilde S_k^{(1)}&0&\cdots&0\\ *&\widetilde S_k^{(2)}&\cdots&0\\\vdots&\vdots&\ddots&\vdots\\ * &*&\cdots&\widetilde S_k^{(r)}\end{matrix} \right],\quad 1\le k\le K
\eean
where
\begin{equation*}
\widetilde{\bS}(j)=\left\{\widetilde{S}_1^{(j)},\dotsc,\widetilde{S}_K^{(j)}\right\}\subset\mathbb{C}^{d_j\times d_j}
\end{equation*}
is irreducible for every $1\le j\le r$, $d_1+\cdots+d_r=d$, and that there exists some $1\le j\le r$ satisfying that $\rho(\bS)=\rho(\widetilde{\bS}(j))$ and $\mu_*$ is also extremal for $\widetilde{\bS}(j)$.
\end{lem}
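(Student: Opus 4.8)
The plan is to first realize the asserted block structure by pure linear algebra, and then to single out the distinguished index $j$ through the ergodic growth rates of the diagonal blocks. For the structural part I would induct on $d$: if $\bS$ is already irreducible the decomposition is trivial, and otherwise a common nontrivial proper invariant subspace $V\subset\mathbb{C}^d$ lets me choose $P$ so that every $P^{-1}S_kP$ is block lower-triangular with a $\dim V$-dimensional trailing block and a complementary quotient block; applying the inductive hypothesis to the two smaller families then refines each into irreducible diagonal blocks $\widetilde{\bS}(1),\dotsc,\widetilde{\bS}(r)$ with $d_1+\cdots+d_r=d$. Since conjugation by the fixed $P$ alters $\norm{S_{i_1}\cdots S_{i_n}}$ only by a factor bounded between two positive constants, it preserves $\rho(\cdot)$, every $n$-th-root growth rate, and hence the extremality of $\mu_*$; so I may replace $\bS$ by $\{P^{-1}S_kP\}$ and work directly in block-triangular form.

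Next I would extract the relevant growth rates from Kingman's theorem. Writing $T_k=P^{-1}S_kP$, the functions $f_n(i_{\bcdot})=\log\norm{T_{i_1}\cdots T_{i_n}}$ satisfy hypothesis $(2)$ by submultiplicativity and are bounded above, so Kingman together with the ergodicity of $\mu_*$ produces a constant a.e.\ limit $\lim_n\frac{1}{n}f_n=\log\rho(\bS)$, the value forced by extremality. Applying the same theorem to each diagonal family gives constants $\lambda_j:=\lim_n\frac1n\log\norm{\widetilde{S}_{i_1}^{(j)}\cdots\widetilde{S}_{i_n}^{(j)}}$ for $\mu_*$-a.e.\ $i_{\bcdot}$. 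Because each $\widetilde{\bS}(j)$ is the action on a successive quotient of the invariant flag, in finite dimension one has $\norm{\widetilde{S}_{i_1}^{(j)}\cdots\widetilde{S}_{i_n}^{(j)}}\le C\norm{T_{i_1}\cdots T_{i_n}}$ for a fixed $C>0$, whence $\lambda_j\le\log\rho(\widetilde{\bS}(j))\le\log\rho(\bS)$ for every $j$; in particular $\max_j\lambda_j\le\log\rho(\bS)$.

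The crux is the reverse inequality $\log\rho(\bS)\le\max_j\lambda_j$, namely that the \emph{off-diagonal} blocks cannot force faster growth. For this I would expand the $(a,b)$-block with $a>b$ of the product $T_{i_1}\cdots T_{i_n}$ as a sum over descending chains $b=j_0<\cdots<j_s=a$, each summand being an alternating product of consecutive diagonal-block products interspersed with bounded off-diagonal factors. Along any such chain the total length $n$ is partitioned among the participating diagonal blocks, so each summand grows at rate at most $\max_j\lambda_j$ up to $o(n)$, and since the number of chains and of summands within a chain is only polynomial in $n$, the triangle inequality yields $\frac1n\log\norm{T_{i_1}\cdots T_{i_n}}\le\max_j\lambda_j+o(1)$ a.e. Combined with the preceding paragraph this gives $\max_j\lambda_j=\log\rho(\bS)$. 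Choosing $j^*$ with $\lambda_{j^*}=\max_j\lambda_j$, monotonicity of the generalized spectral radius under passage to a diagonal block gives $\rho(\widetilde{\bS}(j^*))\le\rho(\bS)$, while the positive-measure set on which $\frac1n\log\norm{\widetilde{S}_{i_1}^{(j^*)}\cdots\widetilde{S}_{i_n}^{(j^*)}}\to\lambda_{j^*}=\log\rho(\bS)$ forces $\rho(\widetilde{\bS}(j^*))\ge\rho(\bS)$; hence $\rho(\widetilde{\bS}(j^*))=\rho(\bS)$ and the a.e.\ limit equals $\log\rho(\widetilde{\bS}(j^*))$, i.e.\ $\mu_*$ is extremal for $\widetilde{\bS}(j^*)$. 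I expect the off-diagonal chain estimate to be the only genuinely delicate point: absorbing the bounded off-diagonal factors and the polynomial count into the $o(n)$ term uniformly is where care is needed, and the cleanest route is probably to reduce (as in Theorem~\ref{thm3.1}) to the two-block case and iterate.
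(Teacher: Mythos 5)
First, note that the paper itself offers no proof of this lemma: it is imported verbatim as \cite[Lemma~3.5]{DHX:aut11}, so there is no in-paper argument to compare your route against. Judged on its own merits, your outline follows what is essentially the standard route (irreducible simultaneous block-triangularization by induction, Kingman applied to each diagonal block, and the identity ``top growth rate of a block-triangular cocycle $=$ max of the diagonal growth rates''), and the first, second and fourth paragraphs are correct: conjugation-invariance of extremality, the inequality $\lambda_j\le\log\rho(\widetilde{\bS}(j))\le\log\rho(\bS)$ via extraction of the $(j,j)$ block, and the final deduction that $\lambda_{j^*}=\log\rho(\bS)$ forces both $\rho(\widetilde{\bS}(j^*))=\rho(\bS)$ and extremality of $\mu_*$ for $\widetilde{\bS}(j^*)$ are all sound.

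The genuine gap is exactly at the point you flag but do not resolve: the claim that ``each summand grows at rate at most $\max_j\lambda_j$ up to $o(n)$.'' A summand in your chain expansion contains diagonal-block products of the form $\widetilde{S}^{(j)}_{i_{m+1}}\cdots\widetilde{S}^{(j)}_{i_{m+\ell}}$, i.e.\ products of the $j$-th cocycle started at time $m$, for \emph{all} $0\le m\le n$ simultaneously. Kingman gives, for $\mu_*$-a.e.\ $i_{\bcdot}$ and each fixed $m$, that $\frac1\ell\log\bigl\Vert\widetilde{S}^{(j)}_{i_{m+1}}\cdots\widetilde{S}^{(j)}_{i_{m+\ell}}\bigr\Vert\to\lambda_j$ as $\ell\to\infty$, but the implied constant $C_\varepsilon(\theta^m i_{\bcdot})$ in the bound $\le C_\varepsilon(\theta^m i_{\bcdot})e^{\ell(\lambda_j+\varepsilon)}$ depends on $m$, and you need it to be $e^{o(n)}$ uniformly over the $O(n)$ starting positions $m\le n$; this is not a consequence of pointwise a.e.\ convergence and is precisely where the content of the lemma lies. (The danger is real: a single long segment in a block with $\rho(\widetilde{\bS}(j))=\rho(\bS)$ but $\lambda_j<\log\rho(\bS)$, started at a late time $m$, is a priori only bounded via the Barabanov norm at rate $\rho(\bS)^\ell$, not at rate $e^{\ell\lambda_j}$.) The gap is fillable by a tempering argument: for fixed large $N$ one has the subadditive block decomposition $\log\bigl\Vert\widetilde{S}^{(j)}_{i_{m+1}}\cdots\widetilde{S}^{(j)}_{i_{m+\ell}}\bigr\Vert\le\frac1N\sum_{t=m}^{m+\ell-N}g_N(\theta^{t}i_{\bcdot})+C_N$ with $g_N(i_{\bcdot})=\log\bigl\Vert\widetilde{S}^{(j)}_{i_1}\cdots\widetilde{S}^{(j)}_{i_N}\bigr\Vert$, and Birkhoff's theorem applied to $g_N\in L^1(\mu_*)$ turns the right-hand side into $\frac{\ell}{N}\int g_N\,d\mu_*+o(n)$ uniformly over $m+\ell\le n$, with $\frac1N\int g_N\,d\mu_*\le\lambda_j+\varepsilon$ for $N$ large; alternatively one may simply invoke the known theorem that the Lyapunov spectrum of a block-triangular cocycle over an ergodic base is the union of the spectra of its diagonal blocks. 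Without one of these ingredients the reverse inequality $\log\rho(\bS)\le\max_j\lambda_j$, and hence the existence of the distinguished index $j$, does not follow from what you have written.
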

\section{Spectral finiteness property}\label{sec4}%

In this section, we will present a sufficient and necessary condition for the spectral finiteness property of a finite set $\bS$ of $d$-by-$d$ matrices.
The main result of this note can be stated as follows:

\begin{Mainthm}
Let $\bS=\{S_1,\ldots,S_K\}\subset\mathbb{C}^{d\times d}$ be an arbitrary finite family of matrices. Then, the following two statements are equivalent to each other.
\begin{enumerate}
\item[$\mathrm{(1)}$] $\bS$ has the spectral finiteness property, i.e., there is a word $({i}_1^*,\dotsc,{i}_n^*)$ in $\{1,\dotsc,K\}^n$ for some $n\ge1$ such that
\bean
\rho(\bS)=\sqrt[n]{\rho(S_{{i}_1^*}\dotsm S_{{i}_n^*})}.
\eean

\item[$\mathrm{(2)}$] There is an extremal $\theta$-ergodic probability measure $\mu_*$ for $\bS$ on $\varSigma_{\!K}^+$, which has a periodic density point $\xi_{\bcdot}\in\varSigma_{\!K}^+$.
\end{enumerate}
\end{Mainthm}

According to the realization theorem (Theorem~\ref{thm2.3}), there always exists at least one extremal $\theta$-ergodic probability measure $\mu_*$ for $\bS$, on the symbolic space $\varSigma_{\!K}^+$.

\begin{proof}
If $\rho(\bS)=0$ then $\rho(S_k)=0$ for all $1\le k\le K$. So in this case, the statement of Main theorem holds trivially. Thus we next assume $\rho(\bS)>0$.

$\mathrm{(1)}\Rightarrow\mathrm{(2)}$. Let there exist a word $(i_1^\prime,\ldots,i_\pi^\prime)\in\{1,\dotsc,K\}^\pi$ for some $\pi\ge1$ satisfying
\bean
\rho(\bS)=\sqrt[\pi]{\rho(S_{i_1^\prime}\cdots S_{i_\pi^\prime})}.
\eean
Then we consider the periodic switching sequence
\bean
\xi_{\bcdot}=(i_n)_{n=1}^{+\infty}\in\varSigma_{\!K}^+
\quad\textrm{with }
i_{n+\ell\pi}=i_n^\prime\ \forall \ell\ge0\textrm{ and }1\le n\le\pi.
\eean
For any $\ell\ge0$, we have $\xi_{\bcdot+\ell}=(i_{n+\ell})_{n=1}^{+\infty}$ and further
from the classical Gel'fand spectral radius formula it follows that
\bean
\lim_{n\to+\infty}\sqrt[n]{\|S_{i_{1+\ell}}\dotsm S_{i_{n+\ell}}\|_2}=\rho(\bS),
\eean
where $\|\cdot\|_2$ denotes the matrix norm induced by the standard Euclidean vector norm on $\mathbb{C}^d$. Thus,
\bean
\mathbb{P}_{\xi_{\bcdot}}=\pi^{-1}\left(\delta_{\xi_{\bcdot}}+\cdots+\delta_{\xi_{\bcdot+\pi-1}}\right)
\eean
is an extremal $\theta$-ergodic probability measure for $\bS$.

$\mathrm{(2)}\Rightarrow\mathrm{(1)}$. First of all, there is no loss of generality in assuming $\rho(\bS)=1$ and suppose that $\bS$ has an extremal $\theta$-ergodic probability measure $\mu_*$ on $\varSigma_{\!K}^+$, which has a periodic density point $i_{\bcdot}^\prime=(i_n^\prime)_{n=1}^{+\infty}$ with $i_{n+\pi}^\prime=i_n^\prime$ for $n\ge1$ and some $\pi\ge1$. By Lemma~\ref{lem3.2} it might be assumed, without loss of generality, that $\bS$ is irreducible and then it has an extremal norm $\pmb{\|}\cdot\pmb{\|}$ on $\mathbb{C}^d$ such that
\bean
1=\hat{\rho}(\bS)=\max_{1\le k\le K}\pmb{\|}S_k\pmb{\|}.
\eean
From the multiplicative ergodic theorem~\cite{FK} and the Kingman subadditive ergodic theorem, it follows immediately that
\bean
\log\rho(\bS)=\inf_{n\ge1}\frac{1}{n}\int_{\varSigma_{\!K}^+}\log\pmb{\|}S_{i_1}\dotsm S_{i_n}\pmb{\|}\,d\mu_*(i_{\bcdot}).
\eean
Therefore,
\bean
1=\inf_{n\ge1}\prod_{1\le i_1,\dotsc,i_n\le K}\pmb{\|}S_{i_1}\cdots S_{i_n}\pmb{\|}^{\mu_*([i_1,\ldots,i_n])/n}
\eean
where $[i_1,\dotsc,i_n]$ denotes the cylinder set defined by the word $(i_1,\dotsm,i_n)$ as in Section~\ref{sec2.1}. Hence, we have
\bean
1\le\prod_{1\le i_1,\dotsc,i_n\le K}\pmb{\|}S_{i_1}\dotsm S_{i_n}\pmb{\|}^{\mu_*([i_1,\dotsm,i_n])/n}\quad\forall n\ge1.
\eean
Since $i_{\bcdot}^\prime=(i_n^\prime)_{n=1}^{+\infty}$ is a density point of $\mu_*$ with $i_{n+\pi}^\prime=i_n^\prime$ for all $n\ge1$, we can obtain that
\bean
1=\sqrt[n\pi]{\pmb{\|}(S_{i_1^\prime}\dotsm S_{i_{\pi}^\prime})^n\pmb{\|}}\quad\forall n\ge1.
\eean
This implies from the Gel'fand formula that
\bean
1=\sqrt[\pi]{\rho(S_{i_1^\prime}\cdots S_{i_{\pi}^\prime})}\le\rho(\bS)
\eean
which shows that $\bS$ has the spectral finiteness property.

The proof of Main theorem is thus completed.
\end{proof}

For any irreducible transition probability matrix $\mathrm{P}=(p_{ij})\in\mathbb{R}^{K\times K}$ with a stationary distribution $\mathbf{p}=(p_1,\ldots,p_K)$ (in which all $p_k>0$), one can define a $\theta$-ergodic probability measure $\mu_{\mathbf{p},\mathrm{P}}$ on $\varSigma_{\!K}^+$ by the following way:
\bean
\mu_{\mathbf{p},\mathrm{P}}([i_1])=p_{i_1}\ \textrm{for }n=1\quad \textrm{and}\quad\mu_{\mathbf{p},\mathrm{P}}([i_1,\dotsc,i_n])=p_{i_1}p_{i_1i_2}\dotsc p_{i_{n-1}i_n}\ \textrm{for }n\ge2,
\eean
for all words $(i_1,\dotsc,i_n)\in\{1,\dotsc,K\}^n$.
Such $\mu_{\mathbf{p},\mathrm{P}}$ is called a canonical $(\mathbf{p},\mathrm{P})$-Markovian probability, which is $\theta$-ergodic (cf.~\cite[Theorem~1.13]{W82}).

The results of our Main theorem lead to the following two corollaries.

\begin{cor}\label{cor4.2}
Let $\bS=\{S_1,\ldots,S_K\}\subset\mathbb{C}^{d\times d}$. If $\mu_{\mathbf{p},\mathrm{P}}$ is extremal for $\bS$, then $\bS$ has the spectral finiteness property.
\end{cor}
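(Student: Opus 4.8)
The plan is to verify that the hypothesis already forces condition~(2) of the Main theorem, so that the finiteness property follows at once from the implication $(2)\Rightarrow(1)$. Concretely, I will show that every canonical $(\mathbf{p},\mathrm{P})$-Markovian probability $\mu_{\mathbf{p},\mathrm{P}}$ with $\mathrm{P}$ irreducible automatically carries a periodic density point, regardless of the matrices $\bS$; once this is in hand, the extremality assumption combined with the Main theorem finishes the argument.

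The first step is to read off the support of $\mu_{\mathbf{p},\mathrm{P}}$ directly from its definition. Since $\mu_{\mathbf{p},\mathrm{P}}([i_1,\dotsc,i_n])=p_{i_1}p_{i_1i_2}\dotsm p_{i_{n-1}i_n}$ and every $p_k>0$, such a cylinder has positive measure precisely when $p_{i_ji_{j+1}}>0$ for each $1\le j\le n-1$. Hence, by the cylinder criterion for density points recalled in Section~\ref{sec2.1}, a sequence $i_{\bcdot}=(i_n)_{n=1}^{+\infty}$ lies in $\mathrm{supp}(\mu_{\mathbf{p},\mathrm{P}})$ if and only if every consecutive transition $i_n\to i_{n+1}$ is admissible, i.e. $p_{i_ni_{n+1}}>0$. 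In other words, $\mathrm{supp}(\mu_{\mathbf{p},\mathrm{P}})$ is exactly the subshift of finite type determined by the transition graph of $\mathrm{P}$.

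The second step is to exhibit a periodic point inside this support. Irreducibility of $\mathrm{P}$ means that the directed graph $G$ on the vertex set $\{1,\dotsc,K\}$, with an edge $i\to j$ whenever $p_{ij}>0$, is strongly connected. A finite strongly connected digraph necessarily contains a directed cycle $j_1\to j_2\to\dotsb\to j_m\to j_1$ (follow edges from any vertex until a vertex repeats). Let $\xi_{\bcdot}$ be the periodic sequence of period $m$ obtained by repeating the block $(j_1,\dotsc,j_m)$. By the characterization above, every finite subword of $\xi_{\bcdot}$ uses only admissible transitions---crucially, the wrap-around transition $j_m\to j_1$ is admissible precisely because we selected a genuine cycle and not merely a path---so $\mu_{\mathbf{p},\mathrm{P}}([\xi_1,\dotsc,\xi_n])>0$ for all $n\ge1$. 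Thus $\xi_{\bcdot}$ is a periodic density point of $\mu_{\mathbf{p},\mathrm{P}}$.

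Finally, combining the two steps, the extremal measure $\mu_*=\mu_{\mathbf{p},\mathrm{P}}$ satisfies condition~(2) of the Main theorem, whence $\bS$ has the spectral finiteness property. The only point requiring care---and the nearest thing to an obstacle---is the support computation together with the insistence on a closed cycle rather than an arbitrary admissible path, since it is the return edge $j_m\to j_1$ that guarantees $\xi_{\bcdot}$ is genuinely periodic while remaining inside $\mathrm{supp}(\mu_{\mathbf{p},\mathrm{P}})$.
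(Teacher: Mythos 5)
Your proposal is correct, and it reduces to the Main theorem in exactly the way the paper does: verify condition~(2) by producing a periodic density point of $\mu_{\mathbf{p},\mathrm{P}}$ and invoke the implication $(2)\Rightarrow(1)$. The difference lies in how the periodic density point is obtained. The paper simply asserts that $\mathrm{supp}(\mu_{\mathbf{p},\mathrm{P}})=\varSigma_{\!K}^+$, from which any periodic sequence whatsoever is a density point; you instead compute that $\mathrm{supp}(\mu_{\mathbf{p},\mathrm{P}})$ is the subshift of finite type determined by the positive entries of $\mathrm{P}$, and then extract a directed cycle from the strongly connected transition graph to manufacture a periodic point inside that support. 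Your version is in fact the more careful one: the paper's claim that the support is the full shift is only valid when \emph{every} $p_{ij}>0$, whereas for a merely irreducible $\mathrm{P}$ (which is all the corollary assumes) the support can be a proper subshift, and it is precisely your cycle argument --- including the insistence on the return edge $j_m\to j_1$ --- that rescues the conclusion in that generality. So your proof buys the corollary under the stated hypothesis without any hidden positivity assumption, at the cost of the short graph-theoretic digression; the paper's one-line argument is shorter but, as written, proves the corollary only for fully supported Markov measures.
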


\begin{proof}
According to the classical theory of symbolic dynamics, $\mathrm{supp}(\mu_{\mathbf{p},\mathrm{P}})=\varSigma_{\!K}^+$. Combining with Main theorem, this completes the proof of Corollary~\ref{cor4.2}.
\end{proof}

\begin{cor}\label{cor4.3}
Let $\bS=\{S_1,\ldots,S_K\}\subset\mathbb{C}^{d\times d}$ satisfy the periodically switched stability condition:
\bean
\rho(S_{i_1}\cdots S_{i_n})<1\quad \forall (i_1,\ldots,i_n)\in\{1,\dotsc,K\}^n\textrm{ and }n\ge1.
\eean
If $\mu_{\mathbf{p},\mathrm{P}}$ is extremal for $\bS$, then $\rho(\bS)<1$.
\end{cor}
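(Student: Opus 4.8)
The plan is to derive this as an immediate consequence of Corollary~\ref{cor4.2} combined with the stability hypothesis. First I would observe that, since $\mu_{\mathbf{p},\mathrm{P}}$ is assumed extremal for $\bS$, Corollary~\ref{cor4.2} already guarantees that $\bS$ enjoys the spectral finiteness property. Hence there is a finite-length word $(i_1^*,\dotsc,i_n^*)\in\{1,\dotsc,K\}^n$ for some $n\ge1$ realizing the joint spectral radius as
\[
\rho(\bS)=\sqrt[n]{\rho(S_{i_1^*}\dotsm S_{i_n^*})}.
\]

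Next I would invoke the periodically switched stability condition applied to this particular word. By hypothesis every finite product has spectral radius strictly below $1$, so in particular $\rho(S_{i_1^*}\dotsm S_{i_n^*})<1$. Since the map $t\mapsto t^{1/n}$ is strictly increasing on $[0,+\infty)$ and fixes $1$, taking $n$-th roots preserves the strict inequality, giving
\[
\rho(\bS)=\sqrt[n]{\rho(S_{i_1^*}\dotsm S_{i_n^*})}<1,
\]
which is exactly the claimed conclusion.

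There is essentially no serious obstacle here: the entire content is packaged in Corollary~\ref{cor4.2}, whose role is to collapse the a~priori infinitary quantity $\rho(\bS)$ onto a single finite word. Once the finiteness property localizes $\rho(\bS)$ to one product, the uniform stability bound---which holds for all words simultaneously---applies to that product and the conclusion follows at once. The only point worth flagging is that the stability hypothesis must be quantified over \emph{all} finite words, not merely over the realizing word, since one does not know in advance which word the finiteness property will select; but this is automatically supplied by the statement of the condition, so no separate verification is needed.
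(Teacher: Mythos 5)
Your argument is correct and follows exactly the paper's own route: apply Corollary~\ref{cor4.2} to get the finiteness property, then apply the periodically switched stability hypothesis to the realizing word to conclude $\rho(\bS)<1$. You merely spell out in more detail the one-line deduction the paper leaves implicit.
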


\begin{proof}
According to Corollary~\ref{cor4.2}, it follows that $\bS$ has the spectral finiteness property, and thus $\rho(\bS)<1$ by the periodically switched stability. Therefore, the conclusion follows.
\end{proof}
\section{Conclusion remarks}\label{sec5}%

In this note, we have proved that $\bS=\{S_1,\dotsc,S_K\}\subset\mathbb{C}^{d\times d}$ has the spectral finiteness property if and only if there exists a periodic switching sequence in the support of some extremal ergodic measure $\mu_*$ of $\bS$. This result reveals that the topological structure of an extremal (ergodic) measures is closely related to the finiteness property, and the dynamical stability of linear switched dynamical system induced by $\bS$ is characterized by this topological structure.

An explicit important example can further illustrate this. Let us consider
\bean
\bS(\alpha)=\left\{\alpha\left[\begin{matrix}1&1\\ 0&1\end{matrix}\right],\ \left[\begin{matrix}1&0\\ 1&1\end{matrix}\right]\right\}\quad \textrm{where }\alpha\in\mathbb{R}.
\eean
From \cite{BM, BTV, Koz07, HMST}, it follows that there is an $\balpha>0$ such that
$\bS(\balpha)$ does not have the spectral finiteness property. Thus, according to the main theorem given in this note, each extremal $\theta$-ergodic probability of $\bS(\balpha)$ does not have any periodic density points.

For the symbolic sequence space $\varSigma_{\!K}^+$, the observable measures are the canonical Markovian probabilities $\mu_{\mathbf{p},\mathrm{P}}$ as defined in Section~\ref{sec4}, and such measures all have periodic density points. Therefore, it is necessary to study the topological structure of those non-observable measures on $\varSigma_{\!K}^+$. This indicates that one has to look at these  non-observable measures on $\varSigma_{\!K}^+$ in order to disprove the finiteness property of $\bS$. In current literature the disapproval of finiteness property essentially makes use of this fact implicitly \cite{BM, BTV, Koz07, HMST}.
\baselineskip=0.9\normalbaselineskip 


\end{document}